\documentclass[11pt,a4paper]{article}
\usepackage[T1]{fontenc}
\usepackage{lmodern}
\usepackage[a4paper,margin=26mm,headheight=14pt]{geometry}
\usepackage{amsmath,amssymb,amsthm,mathtools}
\usepackage{microtype}
\usepackage{xcolor}
\usepackage{enumitem}
\usepackage{hyperref}
\hypersetup{colorlinks=true,linkcolor=red,anchorcolor=blue,citecolor=blue}
\numberwithin{equation}{section}
\newtheorem{theorem}{Theorem}[section]
\newtheorem{lemma}[theorem]{Lemma}
\theoremstyle{remark}
\newtheorem{remark}[theorem]{Remark}
\newcommand{\Pp}{\mathbb P}
\newcommand{\E}{\mathbb E}
\newcommand{\R}{\mathbb R}
\newcommand{\Z}{\mathbb Z}
\newcommand{\capn}{\operatorname{cap}}
\newcommand{\LE}{\operatorname{LE}}
\newcommand{\ind}{\mathbf 1}

\title{\vspace{-10mm}\Large\bfseries An explicit lower bound for the growth exponent of three-dimensional loop-erased random walk}
\author{Runsheng Liu\thanks{School of Mathematical Sciences, Peking University. \href{mailto:liurunsheng@pku.edu.cn}{liurunsheng@pku.edu.cn}}}

\begin{document}
\maketitle
\vspace{-7mm}
\begin{abstract}
In this paper, we derive a new lower bound for the Hausdorff dimension of 3D Brownian cut points by proving an explicit upper bound ($<0.9999$) for $\xi_3(1,1)$, the intersection exponent for two independent Brownian motions in 3D. Consequently, the growth exponent of 3D loop-erased random walk is at least $1.0001$.
\end{abstract}

\section{Introduction}

The loop-erased random walk (LERW), first introduced in Lawler \cite{Law80}, is defined by erasing the loops formed by random walk paths chronologically. It has been widely studied in the last forty years. In four dimensions and above, its scaling limit is Brownian motion; see \cite{Law80,Law86}. In two dimensions, its scaling limit is proven to be SLE$_2$ by Schramm \cite{Sch00}. For the three-dimensional case, Kozma \cite{Koz07} shows the existence of a scaling limit. Shiraishi \cite{Shi18} shows the existence of a growth exponent $\beta$, which also characterizes the Hausdorff dimension of the scaling limit. The explicit value of $\beta$ is a priori unknown, and the best bound before was $\beta\in(1,5/3]$. Simulations in \cite{Wil10} suggests that $\beta\approx 1.624$.

Our main result is the following.
\begin{theorem}\label{thm:main}
Define
\begin{equation}\label{eq:delta}
 \delta_0=\frac{-\log(1-1/1800)}{4\log4}.
\end{equation}
Then we have
\[
 \xi_3(1,1)\le1-\delta_0,
 \qquad \beta\ge1+\delta_0>1.0001,
\]
where $\xi_3(1,1)$ is the intersection exponent for two independent 3D Brownian motions and $\beta$ is the growth exponent for 3D LERW (their exact definitions will be recalled below).
\end{theorem}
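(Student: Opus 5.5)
We reduce the statement about $\beta$ to the statement about $\xi_3(1,1)$ and then prove the latter by a multi-scale submultiplicativity argument with an explicit one-scale gain. For the reduction, we use the known relation between LERW and cut points. The Hausdorff dimension of the set of cut points of 3D Brownian motion is $2-\xi_3(1,1)$ (Lawler). Moreover, the scaling limit of LERW (Kozma, Shiraishi) has dimension $\beta$, and it dominates the cut-point dimension: cut points of the random walk are never erased, so $\beta\ge 2-\xi_3(1,1)$. This comparison is in the literature; it is how the earlier bound $\beta>1$ follows from $\xi_3(1,1)<1$. Hence $\xi_3(1,1)\le 1-\delta_0$ immediately gives $\beta\ge1+\delta_0$. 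A numerical check shows $\delta_0=\log(1800/1799)/(4\log 4)\approx 1.0003\times10^{-4}>10^{-4}$.

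For the exponent bound, we work with the non-intersection probability
\[
q_n=\Pp\bigl(B^1[0,\tau^1_{4^n}]\cap B^2[0,\tau^2_{4^n}]=\emptyset\bigr),
\]
where $B^1,B^2$ start at two antipodal points of the unit sphere and $\tau^i_r$ is the exit time of the ball of radius $r$. By definition $q_n\asymp 4^{-n\xi_3(1,1)}$. The known bound $\xi_3(1,1)\le 1$ comes from a comparison with the exponent $\xi(1,1)$ in the "half-line" sense: $4^{-n}$ is the order of the probability that one Brownian motion avoids a fixed ray-like set, using capacity of the other path. We improve this comparison by a factor $(1-1/1800)^{-1}$ at every fourth dyadic-$4$ scale, which is why the constant is $\log(1-1/1800)$ divided by $4\log 4$. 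Concretely, the plan is to show
\[
q_{n+4}\ge 4^{-4}\,(1-1/1800)^{-1}\,q_n\cdot(\text{up to constants uniform in }n),
\]
or more precisely to compare $q_n$ with an auxiliary quantity $p_n$ satisfying $p_n\asymp 4^{-n}$ exactly. We would then show that $q_{n+4}/q_n\ge (1-1/1800)^{-1}\,p_{n+4}/p_n$ via a separation lemma. The separation lemma states that, conditioned on non-intersection up to scale $4^n$, the two paths are well separated at their endpoints with probability bounded below. Iterating over $n$ and taking logarithms gives $\xi_3(1,1)\le 1-\delta_0$.

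The core step, and the main obstacle, is the explicit one-block gain $1/1800$. We would obtain it by writing the probability of not intersecting across four scales as the expectation of $1-\Pp(\text{hit}\mid B^1)$ and bounding the hitting probability by capacity. The bound $\Pp(B^2 \text{ hits } A)\le \capn(A)/\text{dist}$-type estimates would be used for the upper bound on hitting, and Green's function/Newtonian capacity of a Brownian path across an annulus for the matching term. The value $1$ of the exponent arises from the equality case, in which the capacity of a Brownian path in an annulus is comparable to that of a segment. The gain comes from exhibiting an event of explicit probability on which the first path's capacity is strictly less than the segment's, for instance when the path backtracks or stays near a plane. One then uses the strict inequality $\capn(A\cup A')<\capn(A)+\capn(A')$ for overlapping pieces, quantified with explicit constants. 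Tracking every constant (harmonic measure on spheres, explicit Brownian hitting probabilities $r/|x|$ in 3D, probability of the favorable configuration) is tedious, and the worry is that the constants degrade below $1/1800$. If so, I would lengthen the block from four scales to more scales, which changes only the denominator $4\log 4$.
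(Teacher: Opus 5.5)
Your reduction from $\beta$ to $\xi_3(1,1)$ is fine; the paper uses the equivalent non-intersection comparison $2-\beta\le\xi_3(1,1)$. The gap is in the exponent bound. The step you yourself call the core step has no working mechanism, and the heuristic you offer for it is false.

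\begin{itemize}
\item A line segment, or a ray, has Newtonian capacity $0$ in $\R^3$: it is polar for 3D Brownian motion. By contrast, $W[\sigma_r,\sigma_{2r}]$ has capacity of order $r$. So neither ``$4^{-n}$ is the probability of avoiding a ray-like set'' nor ``the exponent $1$ is the equality case where the path's capacity is comparable to a segment's'' is correct.
\item Strict subadditivity of capacity plays no role in the argument.
\item You never say what the auxiliary $p_n\asymp 4^{-n}$ is.
\item A four-scale ratio inequality $q_{n+4}/q_n\ge(1800/1799)\,p_{n+4}/p_n$ via a separation lemma would require comparing two differently weighted path measures. Nothing in your sketch produces the constant $1/1800$.
\item Falling back on longer blocks would change $\delta_0$, so it would not prove the stated bound.
\item The gain you look for, paths with \emph{small} capacity, points in the wrong direction.
\end{itemize}

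The missing idea is that the benchmark with exponent exactly $1$ is the second moment: $\E Q_R^2\asymp R^{-1}$, i.e.\ $\xi_3(1,2)=1$. The improvement is the quantified slack in $\E Q_R\ge\E Q_R^2$, and it comes from $W$ having \emph{large} capacity at most scales, which forces $Q_R$ to be small. The paper proves three things.
\begin{enumerate}
\item[(i)] The Green's function satisfies $g_{B(0,4r)}(x,y)\ge 1/(9r)$ for $|x|\le r$, $|y|\le 2r$. Hence $\Pp_x(H_K<\tau_{B(0,4r)})\ge\capn(K)/(9r)$.
\item[(ii)] $\Pp\bigl(\capn(W[\sigma_r,\sigma_{2r}])<r/200\mid\mathcal F_{\sigma_r}\bigr)<1/18$. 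This uses Markov's inequality on the energy of the occupation measure over $[\sigma_r,\sigma_r+r^2/25]$, plus a Doob bound for confinement.
\item[(iii)] Let $L_N$ count the good layers among the scales $4^j$. Then the quenched bound $Q_R\le(1-1/1800)^{L_N}$ holds, and a union bound gives $\Pp(L_N<N/4)\le 2^N18^{-3N/4}=R^{-b}$ with $b>1$.
\end{enumerate}
Together these give $Q_R^2\le R^{-\delta_0}Q_R+\ind\{L_N<N/4\}$. Taking expectations, $cR^{-1}\le R^{-\delta_0}\E Q_R+R^{-b}$, so $\E Q_R\gtrsim R^{-1+\delta_0}$.

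The factor $4\log 4$ does not come from blocks of four scales. It comes from requiring a quarter of the $N=\log_4 R$ layers to be good, which is what makes the exceptional probability $R^{-b}$ beat $R^{-1}$. No separation lemma and no ratio comparison are needed.
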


We first recall the definition of the growth exponent of 3D LERW. Throughout, let $B(0,r)$ be the open Euclidean ball of radius $r$. Let $S$ be nearest-neighbour simple random walk on $\Z^3$, started at the origin. Set
\[T_R:=\inf\{n\ge0:|S_n|\ge R\},\]
and
\[M_R:=\operatorname{len}\bigl(\LE(S[0,T_R])\bigr),\]
where length means the number of edges and $\LE(\cdot)$ denotes the loop-erasure of the path. The growth exponent is defined by
\begin{equation}\label{eq:beta}
\beta:=\lim_{R\to\infty}\frac{\log\E M_R}{\log R}.
\end{equation}
The existence of this limit is given by \cite[Theorem 1.1.4]{Shi18}. There is also an equivalent definition of $\beta$ in \cite{Shi18}. Let $S'$ be another independent simple random walk started from $0$ and $T'_R$ be its stopping time respectively. Then there exists $\alpha>0$, such that
\[\Pp\big(S'(0,T'_R]\cap\LE(S[0,T_R])=\varnothing\big)=R^{-\alpha+o(1)},\]
where $\alpha=2-\beta$ and $o(1)$ tends to $0$ as $R\to\infty$.

We then briefly recall the definition of intersection exponents for Brownian motions and refer to \cite[Section 2.4]{GLLQ} for a detailed overview. Let $W,X$ be two independent Brownian motions started uniformly from $\partial B(0,1)$ in $\R^3$. Write $\sigma_r$ and $\tau_r$ for their respective first hitting times of $\partial B(0,r)$, with $\sigma_1=\tau_1=0$. For $R>1$, put
\begin{equation}\label{eq:q}
 Q_R(W)=\Pp\bigl(X[0,\tau_R]\cap W[0,\sigma_R]=\varnothing\mid W\bigr).
\end{equation}
Then, the intersection exponent $\xi_3(1,\lambda)$ is defined by
\[\E[Q_R^{\lambda}]=R^{-\xi_{3}(1,\lambda)+o(1)},\]
where the $o(1)$ term tends to zero as $R\to\infty$. Lawler \cite{Law98} provides the following stronger estimates.
\begin{equation}\label{eq:moments}
 \E Q_R\asymp R^{-\xi_3(1,1)},
 \qquad \E Q_R^2\asymp R^{-1}.
\end{equation}
Here $\asymp$ means comparison above and below by positive constants independent of $R$. The second assertion is the exact identity $\xi_3(1,2)=1$; see \cite{Law89,BL1990}. The Hausdorff dimension of Brownian cut points is proven to be $2-\xi_3(1,1)$ in \cite{Law96}. Since LERW is a subset of random walk, a trivial inequality yields that $\alpha:=2-\beta\leq\xi_{3}(1,1)$. Hence
\begin{equation}\label{eq:cut}
\beta\ge2-\xi.
\end{equation}

% Our main result is the following.
% \begin{theorem}\label{thm:main}
% Define
% \begin{equation}\label{eq:delta}
%  \delta_0=\frac{-\log(1-1/1800)}{4\log4}.
% \end{equation}
% Then we have
% \[
%  \xi_3(1,1)\le1-\delta_0,
%  \qquad \beta\ge1+\delta_0>1.0001.
% \]
% \end{theorem}

We now briefly introduce the proof and the structure of the paper. In Section 2, we provide a uniform control for a Brownian motion to avoid a set with given capacity. In Section 3, we show that it is quite unlikely for a Brownian segment between $\partial B(0,r)$ and $\partial B(0,2r)$ to have small capacity. In Section 4, we combine these two estimates to conclude the proof of Theorem~\ref{thm:main}.

\begin{remark}
Using the same strategy, we can prove a slightly better upper bound for $\xi_3(1,1)$, namely $\xi_3(1,1)<0.999$. Since this improvement brings more technical issues and is still very far from optimal, we do not include it in the paper.
\end{remark}

\medskip

\noindent {\bf Acknowledgements:} This work is supported by the National Key R\&D Program of China (No. 2021YFA1002700) and Beijing Natural Science Foundation (JQ26001). We obtain the results with the assistance from GPT-6 Astra. We thank Xinyi Li and Tian Yu for useful discussions.
\section{Capacity and a uniform hitting estimate}\label{sec:cap}

For a compact set $K\subset\R^3$, we use Newtonian capacity normalized by
\begin{equation}\label{eq:capacity}
 \capn(K):=\left[\inf_{\mu\in\mathcal P(K)}I(\mu)\right]^{-1},
\end{equation}
where $\mathcal{P}(K)$ consists of all the probability measures supported on $K$ and
\[I(\mu):=\iint\frac{\mu(dx)\mu(dy)}{|x-y|}.\]
As usual $1/\infty=0$. Under this normalization, a closed ball of radius $a$ has capacity $a$. For a Brownian motion $X$ started at $x$, define
\[
 H_K:=\inf\{t\ge0:X_t\in K\},
 \qquad \tau_D:=\inf\{t\ge0:X_t\notin D\}.
\]
In particular, hitting at time zero is allowed.

The following lemma provides a uniform lower bound for a Brownian motion to avoid a set with given capacity.

\begin{lemma}\label{lem:hitting}
For every $r>0$, compact $K\subseteq\overline B(0,2r)$, and $|x|\le r$,
\begin{equation}\label{eq:hitting}
 \Pp_x(H_K<\tau_{B(0,4r)})\ge\frac{\capn(K)}{9r}.
\end{equation}
\end{lemma}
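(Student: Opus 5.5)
The plan is to write the hitting probability as a Green potential of an equilibrium measure relative to the ball $D:=B(0,4r)$, and then bound the Dirichlet Green function of $D$ from below uniformly on the relevant region. We may assume $\capn(K)>0$. Normalize the Green function of $D$ so that it matches the kernel in \eqref{eq:capacity}:
\[
G_D(x,y)=\frac1{|x-y|}-\E_x\Bigl[\frac{1}{|X_{\tau_D}-y|}\Bigr]
=\frac1{|x-y|}-\frac{4r}{\bigl|\,|y|x-16r^2\,y/|y|\,\bigr|},
\]
where the second form is the Kelvin-inversion formula for a ball. Standard potential theory, via the last-exit decomposition or the Gauss--Riesz equilibrium problem for the kernel $G_D$, gives a measure $\nu_K$ on $K$ with
\[
\Pp_x(H_K<\tau_D)=\int G_D(x,y)\,\nu_K(dy)
\]
for all $x\in D$. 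Its total mass is the Green capacity $\capn_D(K)=[\inf_{\mu\in\mathcal P(K)}\iint G_D\,d\mu\,d\mu]^{-1}$; with the normalization above the constants match those of \eqref{eq:capacity}. Since $0\le G_D(x,y)\le 1/|x-y|$, the $G_D$-energy of every $\mu$ is at most $I(\mu)$, so $\capn_D(K)\ge\capn(K)$.

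The lemma then follows from the pointwise bound
\[
G_D(x,y)\ge\frac1{9r}\qquad\text{for all }|x|\le r,\ |y|\le 2r,
\]
because then $\Pp_x(H_K<\tau_D)\ge \nu_K(K)/(9r)\ge\capn(K)/(9r)$.

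This pointwise estimate is the main obstacle and is where the constant $9$ comes from. A crude bound fails: using $1/|x-y|\ge 1/(3r)$ and $\bigl||y|x-16r^2y/|y|\bigr|\ge 16r^2-|x||y|\ge14r^2$ separately only gives $1/(3r)-2/(7r)=1/(21r)$. Similarly, comparing unconditioned hitting probabilities from $x$ and from $\partial D$ gives a negative bound. So the two terms must be estimated jointly. The candidate extremal configuration is $x=-re$, $y=2re$ with $|e|=1$. There $|x-y|=3r$ and $\bigl||y|x-16r^2 e\bigr|=18r^2$, so $G_D=1/(3r)-2/(9r)=1/(9r)$ exactly.

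To prove this is the minimum, I would first use that $G_D(\cdot,y)$ is superharmonic in $x$ (harmonic away from $y$) and $G_D(x,\cdot)$ is symmetric and likewise superharmonic. By the minimum principle on the balls $\{|x|\le r\}$ and $\{|y|\le2r\}$ (the pole is handled since $G_D\to+\infty$ there), the infimum is attained with $|x|=r$ and $|y|=2r$. By rotation invariance the problem then reduces to a one-variable function of the angle $\theta$ between $x$ and $y$:
\[
\frac{1}{r\sqrt{5-4\cos\theta}}-\frac{4}{r\sqrt{4+256-64\cos\theta}}\cdot 2,
\]
after expanding $\bigl|2r\,x-16r^2 e\bigr|^2=4r^4+256r^4-64r^4\cos\theta$. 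I would check by elementary calculus, for instance in the variable $c=\cos\theta\in[-1,1]$, that this is decreasing as $c$ decreases. Its value at $c=-1$ is $1/(3r)-8/(18r)\cdot\tfrac12=1/(9r)$, which completes the argument.
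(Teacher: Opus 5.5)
Your proposal is correct. Its skeleton is the same as the paper's: the relative equilibrium measure for $D=B(0,4r)$, the comparison $\capn_D(K)\ge\capn(K)$ from $G_D\le|x-y|^{-1}$, and a uniform bound $G_D\ge 1/(9r)$ on $\{|x|\le r\}\times\{|y|\le 2r\}$.

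Where you differ is the proof of that pointwise bound. The paper writes the image term as $(d^2+A)^{-1/2}$, with $d=|x-y|$ and $A=(16r^2-|x|^2)(16r^2-|y|^2)/(16r^2)$. Since $d^{-1}-(d^2+A)^{-1/2}$ is decreasing in $d$ and increasing in $A$, the separate bounds $d\le 3r$ and $A\ge 45r^2/4$ can be inserted directly, giving $1/(3r)-2/(9r)$ in one line.

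So the failure of the ``crude bound'' you describe comes from the Kelvin form $4r/\bigl||y|x-16r^2y/|y|\bigr|$. In that form the two terms are extremal at incompatible configurations ($x=-re$ versus $x=re$); in the $(d,A)$ form the coupling is built in.

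Your route is valid: superharmonicity in each variable plus the minimum principle pushes $x,y$ to the spheres $|x|=r$, $|y|=2r$, and a one-variable check in $c=\cos\theta$ finishes. It needs no clever parametrization, but it is longer. The calculus step you leave unchecked is in fact the paper's monotonicity in disguise. On those spheres $A\equiv 45r^2/4$, and your function equals $\frac1r\bigl[(5-4c)^{-1/2}-2(65-16c)^{-1/2}\bigr]=d^{-1}-(d^2+45r^2/4)^{-1/2}$ with $d^2=r^2(5-4c)$. Its derivative in $c$ is nonnegative iff $65-16c\ge 4(5-4c)$, i.e.\ $65\ge 20$.

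There are two minor slips.
\begin{itemize}
\item The displayed one-variable function has a spurious factor $2$ on the second term; it should be $4/(r\sqrt{260-64\cos\theta})$. You then cancel it with an unexplained $\tfrac12$ when evaluating at $c=-1$.
\item The identity $\Pp_x(H_K<\tau_D)=\int G_D(x,y)\,\nu_K(dy)$ is only guaranteed for $x\in D\setminus K$. Since $H_K$ allows hitting at time $0$, the left side equals $1$ on $K$, while the potential may be smaller at irregular points. The inequality you need still holds there, because the potential is at most $1$ (or, as the paper notes, $\capn(K)\le 2r$). This only needs a sentence.
\end{itemize}
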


\begin{proof}
Let $D=B(0,4r)$. Normalize the Green's function $g_D$ so that its singularity is $|x-y|^{-1}$; thus $g_D$ is $2\pi$ times the occupation Green density for Brownian motion with generator $\frac12\Delta$. The explicit form of Green's function in a ball is given by
\begin{equation}\label{eq:green}
 g_D(x,y)=\frac1d-\frac1{\sqrt{d^2+A}},
\end{equation}
where
\[d=|x-y|,
 \quad A=\frac{(16r^2-|x|^2)(16r^2-|y|^2)}{16r^2}.\]
The formula at $y=0$ follows by continuity, and the value at $x=y$ is understood as $+\infty$. For $x,y$ such that $|x|\le r$ and $|y|\le 2r$, we have
\[
 d\le3r,\qquad A\ge\frac{45}{4}r^2.
\]
For $d,A>0$, the function $d^{-1}-(d^2+A)^{-1/2}$ decreases with $d$ and increases with $A$. Consequently
\begin{equation}\label{eq:green-min}
 g_D(x,y)\ge\frac1{3r}-\frac1{(9/2)r}=\frac1{9r}.
\end{equation}

For completeness, we define the following relative capacity
\[
 \capn_D(K)=\left[\inf_{\mu\in\mathcal P(K)}
 \iint g_D(u,v)\,\mu(du)\mu(dv)\right]^{-1}.
\]
Since $g_D(u,v)\le|u-v|^{-1}$, we have $\capn_D(K)\ge\capn(K)$. If $\capn(K)>0$, let $e^D_K$ be the relative equilibrium measure, of total mass $\capn_D(K)$. By the standard equilibrium-potential identity (see e.g.~\cite[Theorem 7.28]{BH15}), for any $x\in D\setminus K$, we have
\[
 \Pp_x(H_K<\tau_D)=\int_K g_D(x,y)\,e^D_K(dy).
\]
Applying~\eqref{eq:green-min} gives~\eqref{eq:hitting}. If $\capn(K)=0$, the desired lower bound is immediate. If $x\in K$, the left-hand side is $1$, while $\capn(K)\le2r$ by monotonicity. Combining all cases completes the proof. %Only the identity off $K$ is used, so the possible exceptional set in the equilibrium-potential identity on $K$ introduces no assumption about regularity of $K$.
\end{proof}

\section{An explicit capacity estimate for one Brownian segment}\label{sec:estimate}

Let $(\mathcal F_t)$ be the usual Brownian filtration of $W$, including its initial position. For $r\ge1$, set
\[
 K(r):=W[\sigma_r,\sigma_{2r}]
\]
be the segment from first hitting $\partial B(0,r)$ to $\partial B(0,2r)$ with respect to $W$, which is a compact subset of $\overline B(0,2r)$.

\begin{lemma}\label{lem:capacity}
For every $r\ge1$, almost surely,
\begin{equation}\label{eq:bad-capacity}
 \Pp\left(\capn(K(r))<\frac r{200}\,\middle|\,\mathcal F_{\sigma_r}\right)<\frac1{18}.
\end{equation}
\end{lemma}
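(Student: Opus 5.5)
Condition on $\mathcal F_{\sigma_r}$ and use the strong Markov property. Then $K(r)$ contains $\widetilde W[0,T]$, where $\widetilde W$ is a Brownian motion started at $W(\sigma_r)\in\partial B(0,r)$, and $T$ is any time before $\widetilde W$ exits $B(0,2r)$. Capacity is monotone, so it suffices to bound the capacity of a truncated piece from below. For this I would use the normalized occupation measure as a test measure in~\eqref{eq:capacity}. Fix $t_0=a r^2$, with $a>0$ a numerical constant to be tuned. On the event $G=\{\sup_{s\le t_0}|\widetilde W_s-\widetilde W_0|<r\}$ the path stays in $B(0,2r)$ up to time $t_0$. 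Let $\mu$ be the image of $t_0^{-1}\,ds$ on $[0,t_0]$ under $\widetilde W$. Then
\[
\capn(K(r))\ \ge\ \frac{t_0^2}{\mathcal E},\qquad \mathcal E:=\int_0^{t_0}\!\!\int_0^{t_0}\frac{ds\,dt}{|\widetilde W_s-\widetilde W_t|}.
\]
Consequently
\[
\{\capn(K(r))<r/200\}\subseteq G^c\cup\{\mathcal E>200\,t_0^2/r\}.
\]

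I would bound the two events separately; neither bound depends on the starting point, which makes the estimate uniform and gives the almost-sure statement.

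For the energy term I would compute the mean exactly. In 3D, $\E|B_u|^{-1}=\sqrt{2/(\pi u)}$, so
\[
\E\mathcal E=2\int_0^{t_0}(t_0-u)\sqrt{\tfrac{2}{\pi u}}\,du=\tfrac83\sqrt{\tfrac2\pi}\,t_0^{3/2}.
\]
Markov's inequality then gives
\[
\Pp\bigl(\mathcal E>200t_0^2/r\bigr)\le\frac{\tfrac83\sqrt{2/\pi}}{200\sqrt a}\approx\frac{0.0106}{\sqrt a}.
\]

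For the exit term I would control each coordinate by the reflection principle:
\[
\Pp(G^c)\le 3\cdot 4\,\bar\Phi\bigl(1/\sqrt{3a}\bigr).
\]
One could do better by using the radial process, or the exact exit-time law for a 3D ball, whose tail is explicitly computable.

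The main obstacle is numerical: the constants $1/200$ and $1/18$ must be met, and the two errors pull $a$ in opposite directions. With the crude bounds above, a choice such as $a\approx1/25$ gives roughly $0.023+0.053$, which slightly exceeds $1/18\approx0.0556$. I would tighten this in three ways:
\begin{enumerate}
\item Replace the coordinatewise exit estimate by the exact ball exit-time distribution, which is much smaller for $a$ around $1/20$.
\item Replace Markov's inequality for $\mathcal E$ by Chebyshev's inequality, via an explicit second-moment computation of $\mathcal E$. Its variance is of the same order $t_0^{3}$, with a computable constant.
\item Split $[0,t_0]$ into a few blocks and use subadditivity of the energy with independent increments.
\end{enumerate}
I expect the exact-exit-time improvement combined with Markov's inequality on $\mathcal E$ to already suffice. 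The Chebyshev refinement is the fallback if it does not.
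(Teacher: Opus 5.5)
Your architecture is the same as the paper's. You use the strong Markov property at $\sigma_r$, the normalized occupation measure on $[0,t_0]$ as a test measure, and the exact mean energy $\frac83\sqrt{2/\pi}\,t_0^{3/2}$ with Markov's inequality. You add a separate bound on the event that the path leaves $B(0,2r)$ before time $t_0$. The paper takes $t_0=r^2/25$, where the energy term is $\frac1{15}\sqrt{2/\pi}<\frac4{75}$, matching your computation at $a=1/25$.

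\textbf{The gap: the exit term.} As written, the proposal does not prove the lemma. Your coordinatewise reflection bound gives about $0.023$ at $a=1/25$, so the total is about $0.076$, not ``slightly'' above $1/18\approx0.0556$. No tuning of $a$ fixes this. The energy term alone forces $a>(0.24\sqrt{2/\pi})^2\approx1/27.3$, and minimized over $a$ the sum of your two bounds is still about $0.067$. The whole argument therefore rests on the sharper exit estimate you only say you ``expect'' to work. It does work, but it has to be supplied.

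The paper supplies it with Doob's maximal inequality. With $V_s=W_{\sigma_r+s}-W_{\sigma_r}$ and $t=r^2/25$, it applies the inequality to the nonnegative submartingale $\exp(10|V_s|^2/r^2)$, $s\le t$:
\[
\Pp\Bigl(\sup_{s\le t}|V_s|\ge r\Bigr)\le e^{-10}\,\E e^{10|V_t|^2/r^2}=e^{-10}\bigl(1-20t/r^2\bigr)^{-3/2}=5^{3/2}e^{-10}<10^{-3}.
\]
The total is then below $\frac1{1000}+\frac4{75}<\frac1{18}$.

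An equally short alternative is a half-space argument at the first time $|V|$ reaches $r$. From there, the endpoint $V_t$ lies outside the open ball with conditional probability at least $1/2$. Hence $\Pp(\sup_{s\le t}|V_s|\ge r)\le 2\Pp(|V_t|\ge r)$, which is a $\chi_3$ tail of roughly $3\times10^{-5}$ at $t=r^2/25$. With either bound, your Chebyshev and block-splitting fallbacks are unnecessary.
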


\begin{proof}
Condition on $\mathcal F_{\sigma_r}$. By the strong Markov property,
\[
 V_s=W_{\sigma_r+s}-W_{\sigma_r},\qquad s\ge0,
\]
is a standard Brownian motion started at zero, independent of the conditioned past. Put $t=r^2/25$. Since $|W_{\sigma_r}|=r$, an exit from $B(0,2r)$ before time $\sigma_r+t$ requires $\sup_{s\le t}|V_s|\ge r$.

The process $Z_s:=\exp(10|V_s|^2/r^2)$, $0\le s\le t$, is a non-negative integrable submartingale. Convexity gives the submartingale property, and $20s/r^2\le4/5$ gives integrability. Doob's maximal inequality and the Gaussian exponential-moment formula imply
\begin{align}
 \Pp\left(\sigma_{2r}-\sigma_r<t\,\middle|\,\mathcal F_{\sigma_r}\right)
 &\le\Pp\left(\sup_{s\le t}|V_s|\ge r\right)\notag\\
 &\le e^{-10}\E Z_t
 =e^{-10}(1-20t/r^2)^{-3/2}
 =5^{3/2}e^{-10}. \label{eq:confinement}
\end{align}

Now consider the occupation probability measure
\[
 \mu_t:=\frac1t\int_0^t\delta_{W_{\sigma_r+s}}\,ds.
\]
For $s\ne u$, the increment $V_s-V_u$ is centered Gaussian with covariance $|s-u|I_3$, so
\[
 \E|V_s-V_u|^{-1}=\sqrt{\frac2\pi}\,|s-u|^{-1/2}.
\]
Integration by part then gives
\begin{align}
 \E I(\mu_t)
 &=\frac1{t^2}\sqrt{\frac2\pi}
   \int_0^t\!\int_0^t |s-u|^{-1/2}\,ds\,du\notag\\
 &=\frac83\sqrt{\frac2\pi}\,t^{-1/2}
 =\frac{40}{3r}\sqrt{\frac2\pi}. \label{eq:energy}
\end{align}
The diagonal $s=u$ has two-dimensional Lebesgue measure zero, and the displayed integral is finite.

On the event $\{\sigma_{2r}-\sigma_r\ge t\}$, the measure $\mu_t$ is supported on $K(r)$. Therefore, on that event, $\capn(K(r))<r/200$ implies $I(\mu_t)>200/r$. A union bound,~\eqref{eq:confinement}, and Markov's inequality yield
\[
 \Pp\left(\capn(K(r))<\frac r{200}\,\middle|\,\mathcal F_{\sigma_r}\right)
 \le5^{3/2}e^{-10}+\frac r{200}\E I(\mu_t)=p_0.
\]
Finally, $5^{3/2}e^{-10}<1/1000$ and $\sqrt{2/\pi}<4/5$, combining with \eqref{eq:energy} gives
\[
 p_0<\frac1{1000}+\frac4{75}=\frac{163}{3000}<\frac1{18},
\]
which concludes the proof.
\end{proof}

\section{Proof of the main theorem}
In this section, we complete the proof of Theorem~\ref{thm:main}.

Fix an integer $N\ge1$, set $R=4^N$, and define $r_j:=4^j$ for $0\le j\le N$, so $r_N=R$. For $0\le j<N$, we set
\[K_j=K(r_j):=W[\sigma_{r_j},\sigma_{2r_j}].\]
We say the $j$-th layer is good, denoted by $G_j$, if $\capn(K_j)\ge r_j/200$. Let
\[L_N=\sum_{j=0}^{N-1}\ind_{G_j}\]
denote the total number of good layers for $0\le j\le n$. Since $2r_j<R$, every $K_j$ is a compact subset of $W[0,\tau_R]$. We also note that the event $G_j$ is measurable with respect to $\mathcal F_{\sigma_{2r_j}}$, and
\begin{equation}\label{eq:adapted}
 \mathcal F_{\sigma_{2r_j}}\subseteq\mathcal F_{\sigma_{r_{j+1}}}.
\end{equation}
The following lemma provides an upper tail estimate for the number of good layers.
\begin{lemma}\label{lem:count}
With
\begin{equation}\label{eq:b}
 b=\frac{\frac34\log18-\log2}{\log4}>1,
\end{equation}
we have $\Pp(L_N<N/4)\le R^{-b}$.
\end{lemma}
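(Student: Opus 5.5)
The plan is a union bound over the set of bad layers, with the product bound for each fixed set coming from Lemma~\ref{lem:capacity} via the filtration inclusion~\eqref{eq:adapted}.

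Write $B_j=G_j^c$ for the event that the $j$-th layer is bad, i.e.\ $\capn(K_j)<r_j/200$. The event $\{L_N<N/4\}$ says that more than $3N/4$ of the $N$ layers are bad. Hence there is a set $S\subseteq\{0,\dots,N-1\}$ with $|S|\ge 3N/4$ on which every $B_j$, $j\in S$, occurs. I will first bound $\Pp(\bigcap_{j\in S}B_j)$ for a fixed $S$, and then sum over the at most $2^N$ possible choices of $S$.

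For fixed $S=\{j_1<\dots<j_m\}$, I claim
\[
\Pp\Bigl(\bigcap_{i=1}^m B_{j_i}\Bigr)\le 18^{-m}.
\]
The proof is by induction on $m$, conditioning on $\mathcal F_{\sigma_{r_{j_m}}}$. The event $\bigcap_{i<m}B_{j_i}$ is measurable with respect to $\mathcal F_{\sigma_{2r_{j_{m-1}}}}$. Since $j_{m-1}<j_m$, this $\sigma$-field is contained in $\mathcal F_{\sigma_{r_{j_{m-1}+1}}}\subseteq\mathcal F_{\sigma_{r_{j_m}}}$, by~\eqref{eq:adapted} and monotonicity of the filtration. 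Lemma~\ref{lem:capacity}, applied with $r=r_{j_m}\ge1$, gives $\Pp(B_{j_m}\mid\mathcal F_{\sigma_{r_{j_m}}})<1/18$ almost surely. Therefore
\[
\Pp\Bigl(\bigcap_{i\le m}B_{j_i}\Bigr)=\E\Bigl[\ind_{\bigcap_{i<m}B_{j_i}}\,\Pp\bigl(B_{j_m}\mid\mathcal F_{\sigma_{r_{j_m}}}\bigr)\Bigr]\le\tfrac1{18}\,\Pp\Bigl(\bigcap_{i<m}B_{j_i}\Bigr),
\]
and the induction closes.

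Since $|S|\ge 3N/4$, each such $S$ contributes at most $18^{-3N/4}$. Summing over all subsets gives
\[
\Pp(L_N<N/4)\le 2^N\,18^{-3N/4}=\exp\Bigl(-N\bigl(\tfrac34\log18-\log2\bigr)\Bigr)=4^{-Nb}=R^{-b},
\]
using the definition~\eqref{eq:b} of $b$ and $R=4^N$.

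Finally, $b>1$ is a numerical check. We have $\tfrac34\log18\approx2.168$ and $\log2\approx0.693$, so $b\approx1.475/1.386\approx1.06$.

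The only delicate point is the measurability bookkeeping in the inductive step, which is exactly what~\eqref{eq:adapted} supplies. The rest is routine.
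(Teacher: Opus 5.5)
Your proposal is correct and follows essentially the same route as the paper. The paper also bounds the probability of a fixed bad set by $18^{-|S|}$ using sequential conditioning via Lemma~\ref{lem:capacity} and~\eqref{eq:adapted}, then applies a union bound with at most $2^N$ terms; its restriction to sets of size exactly $\lceil 3N/4\rceil$ changes nothing. For the final step, the paper replaces your decimal approximations with an exact check: $b>1$ is equivalent to $3\log 18>4\log 8$, i.e.\ $18^3=5832>4096=8^4$.
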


\begin{proof}
Let $J=\{j_1<\cdots<j_m\}\subseteq\{0,\ldots,N-1\}$. The preceding bad events are measurable at $\sigma_{r_{j_m}}$ by~\eqref{eq:adapted}. Lemma~\ref{lem:capacity} and the tower property give
\[
 \Pp\left(\bigcap_{j\in J}G_j^c\right)
 \le\frac1{18}\Pp\left(\bigcap_{\ell=1}^{m-1}G_{j_\ell}^c\right)
 \le\cdots
 \le18^{-m}.
\]
If $L_N<N/4$, more than $3N/4$ indices are bad. Such a configuration contains a bad index set of size $k=\lceil3N/4\rceil$. Applying union bound, we conclude that
\begin{equation}\label{eq:count}
 \Pp(L_N<N/4)
 \le\binom Nk18^{-k}
 \le2^N18^{-3N/4}=R^{-b}.
\end{equation}
Finally, $b>1$ is equivalent to $18^3>8^4$, namely $5832>4096$, which completes the proof.
\end{proof}
The next lemma confirms an explicit gap between the exponents $\xi_{3}(1,1)$ and $\xi_{3}(1,2)$.
\begin{lemma}\label{lem:quenched}
Almost surely in $W$,
\begin{equation}\label{eq:quenched}
 Q_R(W)\le(1-1/1800)^{L_N}.
\end{equation}
Consequently,
\begin{equation}\label{eq:moment-ineq}
 \E Q_R^2\le R^{-\delta_0}\E Q_R+R^{-b},
\end{equation}
where $\delta_0$ is defined in \eqref{eq:delta}.
\end{lemma}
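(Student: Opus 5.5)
Given $W$, I will prove \eqref{eq:quenched} by running $X$ through the successive scales and applying Lemma~\ref{lem:hitting} once on each good layer. For $0\le j<N$, let $\tau_{r_j}$ be the first time $X$ hits $\partial B(0,r_j)$; note $\tau_{r_0}=0$ and $|X_{\tau_{r_j}}|=r_j$.

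Fix a good layer $j$. Apply Lemma~\ref{lem:hitting} with $r=r_j$ and $K=K_j\subseteq\overline B(0,2r_j)$, starting from $x=X_{\tau_{r_j}}$, which satisfies $|x|\le r_j$. By the strong Markov property of $X$, which is independent of $W$, the conditional probability that $X$ hits $K_j$ before leaving $B(0,4r_j)=B(0,r_{j+1})$ is at least $\capn(K_j)/(9r_j)\ge 1/1800$. The exit time $\tau_{r_{j+1}}\le\tau_R$, and $K_j\subseteq W[0,\sigma_R]$, so such a hit forces an intersection.

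Define $E_j$ as the event that $X[\tau_{r_j},\tau_{r_{j+1}}]$ avoids $K_j$. Then
\[
\Pp(E_j\mid W,\ X[0,\tau_{r_j}])\le 1-1/1800 \quad\text{on } G_j .
\]
The events $E_0,\dots,E_{j-1}$ are measurable with respect to $X[0,\tau_{r_j}]$. Iterating the tower property from $j=N-1$ down to $j=0$ gives $Q_R(W)\le\Pp(\bigcap_{j\in G}E_j\mid W)\le(1-1/1800)^{L_N}$, where $G$ denotes the set of good indices. The layers are disjoint in $X$-time because each uses the annulus from $r_j$ to $4r_j=r_{j+1}$. This is why the scales are spaced by factor $4$ rather than $2$.

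For the moment inequality, split according to whether $L_N\ge N/4$:
\[
\E Q_R^2=\E[Q_R^2\ind_{L_N\ge N/4}]+\E[Q_R^2\ind_{L_N<N/4}].
\]
On $\{L_N\ge N/4\}$, use $Q_R^2\le Q_R\cdot(1-1/1800)^{N/4}$. Since $R=4^N$,
\[
(1-1/1800)^{N/4}=\exp\Bigl(\tfrac{N}{4}\log(1-1/1800)\Bigr)=R^{-\delta_0},
\]
which matches \eqref{eq:delta}. On the complement, use $Q_R\le1$ together with Lemma~\ref{lem:count} to bound that term by $R^{-b}$. Together these give \eqref{eq:moment-ineq}.

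The main obstacle is the measurability in the iterated conditioning. One must check that $X_{\tau_{r_j}}$ lies in the closed ball of radius $r_j$, so the lemma applies uniformly over starting points, and that conditioning on all of $W$ is legitimate. Independence of $X$ and $W$ means that $K_j$ is a fixed compact set under the conditional law. The case $j=0$ with $X_0\in\partial B(0,1)$ is covered because $|x|\le r$ is allowed.
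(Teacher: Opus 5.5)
Your proposal is correct and follows essentially the same route as the paper. It applies Lemma~\ref{lem:hitting} on each good layer via the strong Markov property of $X$ at $\tau_{r_j}$, iterates the conditioning over the consecutive windows $[\tau_{r_j},\tau_{r_{j+1}}]$ to get \eqref{eq:quenched}, and then splits $\E Q_R^2$ on $\{L_N\ge N/4\}$ using $(1-1/1800)^{N/4}=R^{-\delta_0}$ together with Lemma~\ref{lem:count}.
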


\begin{proof}
We first fix the entire path $W$, so the sets $K_j$ and the good indices of layers are deterministic. By Lemma~\ref{lem:hitting}, if the $j$-th layer is good, then, uniformly in $|x|=r_j$, we have
\[
 \Pp_x(H_{K_j}<\tau_{B(0,r_{j+1})})
 \ge\frac{\capn(K_j)}{9r_j}\ge\frac1{1800}.
\]
Let $A_j$ be the event that the test segment $X[\tau_{r_j},\tau_{r_{j+1}}]$ avoids $K_j$. By the strong Markov property of $X$, we have
\[
 \Pp(A_j\mid\mathcal F^X_{\tau_{r_j}},W)
 \le(1-1/1800)^{\ind_{G_j}},
\]
where $\mathcal F^X$ is the usual Brownian filtration of $X$. For $i<j$, $A_i$ is measurable with respect to $\mathcal F_{\tau_{r_j}}^{X}$. Note that, the event that $X$ avoids $W$ will imply every $A_j$. Hence, by strong Markov property, we have
\[
 Q_R(W)\leq
 \Pp\left(\bigcap_{j=0}^{N-1}A_j\,\middle|\,W\right)
 \le\prod_{j=0}^{N-1}(1-1/1800)^{\ind_{G_j}}
 =(1-1/1800)^{L_N},
\]
which proves~\eqref{eq:quenched}.

Then, applying \eqref{eq:quenched} and noting that $(1-1/1800)^{N/4}=R^{-\delta_0}$, we have
\[Q_R^2=Q_R^2\ind\{L_N\ge N/4\}+Q_R^2\ind\{L_N<N/4\}\leq R^{-\delta_0}Q_R+\ind\{L_N<N/4\}\]
Taking expectations on both sides and using Lemma~\ref{lem:count} proves~\eqref{eq:moment-ineq}.
\end{proof}

We are now ready to prove Theorem~\ref{thm:main}.

\begin{proof}[Proof of Theorem~\ref{thm:main}]
By the second moment estimate in~\eqref{eq:moments}, there exists $c_*>0$ such that $\E Q_R^2\ge c_*R^{-1}$ for all sufficiently large $R$. Applying~\eqref{eq:moment-ineq} along $R=4^N$ gives
\[
 R^{-\delta_0}\E Q_R\ge c_*R^{-1}-R^{-b}.
\]
Since $b>1$, the second term is at most $(c_*/2)R^{-1}$ for all sufficiently large $N$. Thus
\[
 \E Q_R\ge\frac{c_*}{2}R^{-1+\delta_0}.
\]
The first relation in~\eqref{eq:moments} now implies $\xi\le1-\delta_0$. The discrete comparison~\eqref{eq:cut} yields $\beta\ge1+\delta_0$.

Finally $-\log(1-u)>u$ for $0<u<1$, so
\[
 \delta_0>\frac1{7200\log4}>\frac1{10000},
\]
which gives the $1.0001$ bound.
% For a purely rational verification of the last inequality, the positive Taylor series gives
% \[
%  e^{25/36}>\sum_{k=0}^{4}\frac{(25/36)^k}{k!}
%  =\frac{80665009}{40310784}>2.
% \]
% Therefore $\log4<25/18$, as required.
\end{proof}

% The following checks identify the delicate points of the argument. Capacity and the killed Green kernel use the same normalization, so no factor of $2\pi$ is missing. The hitting lemma treats points inside $K$ separately and uses the equilibrium-potential identity only outside $K$. The auxiliary occupation interval may extend past an exit time, but it is used as a measure on $K(r)$ only on the no-exit event. Good-scale events are adapted by~\eqref{eq:adapted}; independence is unnecessary. The test-path windows concatenate exactly, and their bounds are applied without conditioning on future avoidance. Finally, the second moment is a packet event, not pairwise nonintersection of three Brownian paths.

% The arithmetic constants are approximately
% \[
%  p_0\approx0.053700,
%  \qquad b\approx1.063722,
%  \qquad \delta_0\approx0.000100215.
% \]
% These decimals are explanatory only; the proof uses the exact expressions and inequalities above. Subject to the explicitly cited established results, the proof is complete. The value $1.0001$ supplies an explicit constant in the classical strict inequality $\beta>1$; the argument does not establish $1.01$ or $3/2$. In fact, the known inequality $\xi_3(1,1)>1/2$ means that the cut-time exponent $2-\xi_3(1,1)$ is strictly below $3/2$. Proving a lower bound of $3/2$ requires additional information beyond this cut-time comparison.

\end{document}